\theoremstyle{plain}
\newtheorem{theorem}[equation]{Theorem}
\newtheorem{lemma}[equation]{Lemma}
\theoremstyle{definition}
\theoremstyle{remark}
\newtheorem{example}[equation]{Example}
\newtheorem{remark}[equation]{Remark}
\numberwithin{equation}{section}
\newcommand{\ainc}[1]{\hyperref[ainc]{{\normalfont(aInc){\ensuremath{_{#1}}}}}}
\newcommand{\adec}[1]{\hyperref[adec]{{\normalfont(aDec){\ensuremath{_{#1}}}}}}
\newcommand{\inc}[1]{\hyperref[inc]{{\normalfont(Inc){\ensuremath{_{#1}}}}}}
\newcommand{\dec}[1]{\hyperref[dec]{{\normalfont(Dec){\ensuremath{_{#1}}}}}}
\renewcommand{\phi}{\varphi}
\renewcommand{\epsilon}{\varepsilon}
\renewcommand{\rho}{\varrho}
\def\le{\leqslant}
\def\ge{\geqslant}
\def\phi{\varphi}
\def\rho{\varrho}
\def\vartheta{\theta}
\newcommand{\N}{\mathbb{N}}
\newcommand{\R}{\mathbb{R}}
\newcommand{\Rn}{{\mathbb{R}^n}}
\DeclareMathOperator*{\argmax}{arg\,max}
\renewcommand{\div}{\operatorname{div}}
\definecolor{dg}{rgb}{0.01, 0.75, 0.24}
\title{Monotone approximation of differentiable convex functions with applications 
to general minimization problems}
\author{Petteri Harjulehto}
\address{Petteri Harjulehto,
Department of Mathematics and Statistics,
FI-00014 University of Helsinki, Finland}
\email{\texttt{petteri.harjulehto@helsinki.fi}}
\author{Peter Hästö}
\address{Peter Hästö, Department of Mathematics and Statistics,
FI-20014 University of Turku, Finland 
\& Department of Mathematics and Statistics,
FI-00014 University of Helsinki, Finland
}
\email{\texttt{peter.hasto@helsinki.fi}}
\author[A.\,Torricelli]
{Andrea Torricelli}
\address{A.\,Torricelli, Dipartimento di Scienze Fisiche, Informatiche e Matematiche, Universit\`a degli Studi di Modena e Reggio Emilia, via Campi 213/b, 41125, Modena, Italy.}
\email{andrea.torricelli@unipr.it}
\begin{document}
\begin{abstract}
We study minimizers of non-autonomous energies with minimal growth and coercivity assumptions 
on the energy. We show that the minimizer is nevertheless the solution of 
the relevant Euler--Lagrange equation or inequality. The main tool is an extension result for 
convex $C^1$-energies. 
\end{abstract}

\keywords{Variational inequalities, minimizers, obstacle problems, 
generalized Orlicz, Musielak--Orlicz, generalized Young measures, convex extension}
\subjclass[2020]{35J87 (49J40, 47J20)}

\maketitle

%%%%%%%%%%%%%%%%%%%%%%%%%%%%%%%%%%%%%%%%%%%%%%%%%%%%%%%%%%%%%%%%%%%%%%%%
%%%%%%%%%%%%%%%%%%%%%%%%%%%%%%%%%%%%%%%%%%%%%%%%%%%%%%%%%%%%%%%%%%%%%%%%
%%%%%%%%%%%%%%%%%%%%%%%%%%%%%%%%%%%%%%%%%%%%%%%%%%%%%%%%%%%%%%%%%%%%%%%%
\section{Introduction and statement of the main result}

We study minimization problems in a bounded domain $\Omega\subset\Rn$.
We denote the distributional gradient of a Sobolev function $v\in W^{1,1}(\Omega)$ by $\nabla v$, and the derivative of $F:\Omega\times\Rn\to\R$ with respect to the second variable by $F'$.
For boundary values $u_0\in W^{1,1}(\Omega)$ we define
\begin{align*}
W_{u_0}^{1,1}(\Omega):=\left\{v\in W^{1,1}(\Omega): v-u_0\in W_0^{1,1}(\Omega)\right\}.
\end{align*}
Let $F:\Omega\times\Rn\to [0,\infty)$ be a Carath\'eodory function and consider the minimization problem
\begin{equation}\tag{$\min F$}
\label{eq:minF}
\min_{v\in K}\int_\Omega F(x,\nabla v)\,dx, 
\end{equation}
where $K$ is a non-empty, closed and convex subset of $W^{1,1}_{u_0}(\Omega)$. 
%Note that a closed and convex set it is also weakly closed \cite[p.~6]{EkeT99}. 
%, superlinear at infinity and $C^1$ in the second variable, and
For simplicity and with minor loss of generality, we assume that $F(x,0)=0=F'(x,0)$ for every $x\in\Omega$.

This problem has been studied recently for autonomous energies, i.e.\ when $F(x,\xi)=F(\xi)$ 
does not depend on $x$. In the unconstrained minimization case $K=W^{1,1}_{u_0}(\Omega)$, 
Carozza, Kristensen and Passarelli di Napoli studied 
the problem first when $F$ has $(p,q)$-growth \cite{CarKP14} and then for the 
weaker assumption of superlinear growth \cite{CarKP15}. 
In the case of the obstacle problem 
(i.e.\ $K$ from \eqref{eq:obstacle}), analogous results were derived 
by Eleuteri and Passarelli di Napoli \cite{EleP22} and 
Riccò and Torricelli \cite{RicT24}, respectively. 
In a different direction Koch and Kristensen \cite{KocK_pp} generalized \cite{CarKP15} to 
higher-order derivatives for a general closed and convex $K$. Our main contribution is to 
generalize \cite{CarKP15, EleP22, RicT24} to the non-autonomous case. Moreover, we have significantly 
streamlined the proof in order to avoid extraneous assumptions in our more general setting
(e.g.\ from convolution). The most substantial improvements 
are in the approximation procedure and in substituting generalized Young measures with a direct 
argument. While in \cite{CarKP14}, the authors obtain a smooth approximation using a four stage approach, we show in Theorem~\ref{thm:approximation} that 
the necessary properties are satisfied already for the approximant of their first stage.
Not only is this a simplification, but this approximation also has some good properties that 
the four-stage approximant lacks, such as preserving the value and derivative of the 
energy at the origin, while lacking others, like $C^\infty$-smoothness. 

Next we state our main result. 
Note that the continuous differentiability and superlinearity are both 
uniform with respect to the first variable. Moreover, due to our simplified proof, we 
are able to avoid continuity assumptions in the $x$-variable. 

%In these type of results continuity in the 
%$x$-variable is usually not needed in the reflexive setting (e.g., \cite{HarHK16}), but 
%seems to be related to linear or near linear growth (see \cite[Proposition~6.3]{EleHH_pp}). 
%On the other hand, the stronger assumption from harmonic analysis and 
%regularity theory in generalized Orlicz spaces 
%(like (A1), (M) or  ($\Phi_5$), see \cite{ChlGSW21, HarH19, HasO22, MaeMOS13a}), are not needed. 

\begin{theorem}
\label{thm:main}
Let $\Omega\subset\Rn$, $n\ge 2$, be a bounded domain, $u_0\in W^{1,1}(\Omega)$ and $K\subset W^{1,1}_{u_0}$ be closed and convex. 
Assume that $F: \Omega\times \Rn\to [0,\infty)$ is Carath\'eodory,  
strictly convex, locally bounded and superlinear at infinity. 
Further we assume for every $x\in\Omega$ that 
$F(x,0)=0=F'(x,0)$ and $F(x,\cdot)\in C^1(\Rn)$ and that 
$F'$ is also a Carath\'eodory function.
If $F(\cdot,t \nabla w_0)\in L^1(\Omega)$ for some $t>1$ and $w_0 \in K$, then there exists a unique minimizer $u \in K$ of \eqref{eq:minF}, 
and it satisfies
\[
F^*(\cdot,F'(\cdot,\nabla u)) \in L^1(\Omega)
\quad\text{and}\quad 
F'(\cdot,\nabla u)\cdot \nabla u \in L^1(\Omega).
\]
For any $\eta\in W^{1,\infty}(\Omega)$ with $\eta+K\subset K$, we have 
\[
\int_\Omega F'(x,\nabla u)\cdot \nabla \eta \, dx \ge 0.
\]
\end{theorem}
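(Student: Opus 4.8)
The plan is to establish existence and uniqueness first by the direct method, then derive the integrability statements, and finally pass to the variational inequality via a truncation/approximation argument using the monotone $C^1$-approximation from Theorem~\ref{thm:approximation}.

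\textbf{Existence and uniqueness.} I would start from the admissible competitor $w_0\in K$ with $F(\cdot,t\nabla w_0)\in L^1(\Omega)$ for some $t>1$; in particular the infimum $m:=\inf_{v\in K}\int_\Omega F(x,\nabla v)\,dx$ is finite. Take a minimizing sequence $(v_j)\subset K$. Superlinearity of $F$ at infinity, uniform in $x$, gives a function $\Theta$ with $\Theta(s)/s\to\infty$ and $F(x,\xi)\ge\Theta(|\xi|)$, so $\sup_j\int_\Omega\Theta(|\nabla v_j|)\,dx<\infty$; by the de~la~Vall\'ee-Poussin criterion $(\nabla v_j)$ is equi-integrable, hence weakly precompact in $L^1(\Omega;\Rn)$, and since $v_j-u_0\in W^{1,1}_0(\Omega)$ with controlled gradients, Poincar\'e gives $v_j\rightharpoonup u$ in $W^{1,1}(\Omega)$ along a subsequence, with $u\in K$ because $K$ is convex and closed, hence weakly closed. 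Convexity of $\xi\mapsto F(x,\xi)$ plus the Carath\'eodory property give weak lower semicontinuity of the functional (Ioffe's theorem), so $u$ is a minimizer. Uniqueness follows from strict convexity: if $u_1,u_2$ were two minimizers, then $\tfrac12(u_1+u_2)\in K$ would have strictly smaller energy on the set where $\nabla u_1\neq\nabla u_2$, forcing $\nabla u_1=\nabla u_2$ a.e.\ and thus $u_1=u_2$.

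\textbf{Integrability of $F'(\cdot,\nabla u)\cdot\nabla u$ and $F^*(\cdot,F'(\cdot,\nabla u))$.} This is where the approximation theorem does the real work, and I expect it to be the main obstacle. Apply Theorem~\ref{thm:approximation} to $F$ to obtain a sequence $F_k$ of convex $C^1$ functions with controlled (e.g.\ polynomial or at least bounded-on-bounded-sets) growth, with $F_k\nearrow F$ and $F_k'\to F'$ in a suitable sense, and with $F_k(x,0)=0=F_k'(x,0)$. For each $k$ the truncated/regularized functional has a minimizer $u_k\in K$ for which the Euler--Lagrange machinery is classical, so one gets the inner variation inequality and an a~priori bound of the form $\int_\Omega F_k'(x,\nabla u_k)\cdot\nabla u_k\,dx\le C$ with $C$ independent of $k$, typically by comparing $u_k$ with $w_0$ and exploiting $F(\cdot,t\nabla w_0)\in L^1$ together with convexity to absorb the term $F_k'(\cdot,\nabla u_k)\cdot(\nabla w_0-\nabla u_k)$. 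Then one shows $u_k\rightharpoonup u$ (using that $u$ minimizes the limit functional and the $F_k$ increase to $F$), and passes to the limit using Fatou together with the pointwise convergence $F_k'(x,\nabla u_k)\cdot\nabla u_k\to F'(x,\nabla u)\cdot\nabla u$ (up to extracting subsequences and using that $\nabla u_k\to\nabla u$ in measure, which itself follows from strict convexity via a standard Young-measure-free argument). The bound on $F^*(\cdot,F'(\cdot,\nabla u))$ then comes from the Fenchel equality $F^*(x,F'(x,\xi))=F'(x,\xi)\cdot\xi-F(x,\xi)\le F'(x,\xi)\cdot\xi$, valid because $F(x,\cdot)\in C^1$, so it is in $L^1$ once $F'(\cdot,\nabla u)\cdot\nabla u$ is.

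\textbf{The variational inequality.} Fix $\eta\in W^{1,\infty}(\Omega)$ with $\eta+K\subset K$. For $s\in(0,1)$ the function $u+s\eta$ lies in $K$ (since $u+s\eta=s(u+\eta)+(1-s)u$ and $K$ is convex), so minimality gives
\[
0\le \frac{1}{s}\int_\Omega\big(F(x,\nabla u+s\nabla\eta)-F(x,\nabla u)\big)\,dx.
\]
By convexity the difference quotient $\tfrac1s\big(F(x,\nabla u+s\nabla\eta)-F(x,\nabla u)\big)$ is monotone decreasing in $s$ as $s\downarrow0$ and converges pointwise a.e.\ to $F'(x,\nabla u)\cdot\nabla\eta$; moreover, again by convexity and $\eta\in W^{1,\infty}$, it is dominated from above by $F(x,\nabla u+\nabla\eta)-F(x,\nabla u)\in L^1(\Omega)$ (using local boundedness of $F$ in $\xi$ together with $F(\cdot,\nabla u)\le F'(\cdot,\nabla u)\cdot\nabla u\in L^1$) and from below by $-F'(x,\nabla u)\cdot\nabla\eta$, whose positive part is controlled by $\|\nabla\eta\|_\infty\,|F'(x,\nabla u)|$ — so I would instead invoke monotone convergence on the non-negative quantity $F(x,\nabla u+s\nabla\eta)-F(x,\nabla u)+s\,F'(x,\nabla u)\cdot\nabla\eta$ divided by $s$, or simply the dominated convergence theorem with the $L^1$ majorant just described. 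Either way, letting $s\downarrow0$ yields $\int_\Omega F'(x,\nabla u)\cdot\nabla\eta\,dx\ge0$, which is the claimed inequality. The only delicate point here is the integrability of $F'(\cdot,\nabla u)\cdot\nabla\eta$, which is guaranteed by $F'(\cdot,\nabla u)\cdot\nabla u\in L^1$ and the bound $|F'(x,\xi)\cdot\zeta|\le \|\zeta\|_\infty\,|F'(x,\xi)|$ together with a coercivity-type estimate $|F'(x,\xi)|\lesssim 1+F'(x,\xi)\cdot\xi$ coming from convexity and $F'(x,0)=0$.
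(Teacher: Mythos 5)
Your existence-and-uniqueness step is fine (and is a legitimate alternative to the paper, which obtains existence as a by-product of the approximation), and your overall plan for the integrability claims is the right one. But there are two genuine gaps. First, the minimizers $u_k$ of the truncated functionals do not exist in general: the $F_k$ of Theorem~\ref{thm:approximation} are $k$-Lipschitz, hence of \emph{linear} growth, and a linear-growth functional on $K\subset W^{1,1}_{u_0}(\Omega)$ need not attain its infimum ($L^1$-bounded gradient sequences are not weakly precompact). This is exactly why the paper replaces minimizers by Ekeland almost-minimizers (Theorem~\ref{Ekeland}), which still yield the perturbed variational inequality \eqref{dis-var} up to an $\epsilon_k$ error. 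Your sketch of the a priori bound (test against $w_0$, Fenchel identity/inequality with the factor $t>1$, Fatou) otherwise matches the paper's, so this gap is repairable, but as written the step ``for each $k$ the truncated functional has a minimizer for which the Euler--Lagrange machinery is classical'' is false.

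The second gap is more serious and is the heart of the problem. In the variational-inequality step you differentiate $s\mapsto I(u+s\eta)$ at $s=0$ and invoke dominated or monotone convergence. Both require an integrable starting majorant, i.e.\ $F(\cdot,\nabla u+s_0\nabla\eta)\in L^1(\Omega)$ for some $s_0>0$; this is not established and does not follow from the hypotheses (local boundedness of $F$ in $\xi$ is useless since $\nabla u$ is unbounded, and $F(\cdot,\nabla u)\in L^1$ gives only a lower bound for $F(\cdot,\nabla u+s\nabla\eta)$ by convexity). The one-sided bound $h_s\ge F'(\cdot,\nabla u)\cdot\nabla\eta$ with the latter in $L^1$ does not suffice: a nonnegative sequence $g_s=h_s-h_0$ can decrease pointwise to $0$ while $\int_\Omega g_s\,dx$ stays equal to $+\infty$ (or bounded away from $0$), so $\lim_s\int h_s\,dx\ge 0$ does not yield $\int h_0\,dx\ge 0$. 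Indeed, $I(u+s\eta)$ may be infinite for every $s>0$, and the possibility of this Lavrentiev-type degeneracy is precisely what forces the detour through \eqref{dis-var}: there the $k$-Lipschitz bound supplies the majorant $k\,|\nabla(w-u_k)|$, and one then passes $k\to\infty$ using the uniform bound on $\int F^*(x,\sigma_k)\,dx$, de la Vall\'ee--Poussin equi-integrability, convergence in measure of $\sigma_k$ (via Scorza Dragoni), and Vitali's theorem. Your proposal skips all of this for the final step, and that is where it breaks down.
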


We have two particular closed and convex sets $K$ in mind corresponding 
to unconstrained minimization and the obstacle problem.
When $K=W^{1,1}_{u_0}(\Omega)$ the condition $\eta+K\subset K$ in the theorem holds for all 
$\eta\in C^\infty_0(\Omega)$ and the last inequality of the theorem means, 
by definition, that 
\[
\div F'(x,\nabla u) = 0
\]
in a distributional sense.
When 
\begin{equation}\label{eq:obstacle}
K=
\big\{ v\in W_{u_0}^{1,1}(\Omega): v\ge\psi \text{ a.e. in }\Omega
%,\, F(\cdot,\nabla v)\in L^1(\Omega)
\big\},
\end{equation}
the condition $\eta+K\subset K$ holds for all non-negative $\eta\in C^\infty_0(\Omega)$ 
and the theorem says that 
\[
\div F'(x,\nabla u) \le 0
\]
in a distributional sense.
This is the obstacle problem with obstacle $\psi: \Omega\to [-\infty, \infty)$. 

%The papers \cite{EleP22, RicT24} on the obstacle problem also include results on the 
%so-called Anzellotti pairing, essentially choosing $\eta=\psi$ in the integral inequality in Theorem~\ref{thm:main}. 
%However, there are steps in these papers whose justifications are unclear to us and it seems that a 
%substantial theory for convergence in anisotropic Sobolev spaces 
%$W^{1,F}(\Omega)$ needs to be developed in order to obtain optimal results with natural conditions. 
%Therefore, this is left to future studies. 

Let us conclude with some motivation for the chosen non-autonomous framework. 
Variable exponent and double phase energies have been widely studied in the past 
20 years. Both are special cases of generalized Orlicz growth \cite{ChlGSW21, HarH19}. 
We give several ``extreme'' examples of energies which utilize the general framework that we 
study in this paper. Of course, ordinary cases such as the double phase 
energy $F(x,\xi):=|\xi|^p + a(x) |\xi|^q$ with $a:\overline\Omega\to [0,\infty)$ and $q>p>1$ 
\cite{ColM15} are also included. 

\begin{example}
Some admissible energies not covered by previous research are:
\begin{enumerate}
\item
Exponential growth with coefficient $F(x,\xi):=e^{\omega(x)\, |\xi|}-1$, where 
$\omega:\overline\Omega\to I\subset\subset (0,\infty)$.
\item
Perturbed variable exponent $F(x,\xi):= |\xi|^{p(x)}\log(2+|\xi|)$, where $p:\overline\Omega\to [1,\infty)$ 
 \cite{GiaP13}.
\item
Double phase energy with nearly linear growth $F(x,\xi):= |\xi|\log(2+|\xi|) + a(x)\, |\xi|^q$,
where $a:\overline\Omega\to [0,\infty)$ and $q>1$ \cite{DefM23}.
\item
Anisotropic double phase $F(x,\xi):=|\xi|^p + a(x) |\xi_1|^q$, where 
$a:\overline\Omega\to [0,\infty)$, $\xi_1$ is the first component of $\xi$, and $q>p>1$ \cite{ChlGSW21, Has23}.
\end{enumerate}
\end{example}

Throughout the paper we always assume that $\Omega$ is a bounded domain in $\Rn$, $n \ge 2$, 
and that $K\subset W^{1,1}_{u_0}$ is closed and convex. Moreover we denote 
$\eta+K:=\left\{\eta+v: v\in K \right\}$. For $r>0$, $B_r$ is the open ball with radius 
$r$ centered at the origin.

%%%%%%%%%%%%%%%%%%%%%%%%%%%%%%%%%%%%%%%%%%%%%%%%%%%%%%%%%%%%%%%%%%%%%%%%
%%%%%%%%%%%%%%%%%%%%%%%%%%%%%%%%%%%%%%%%%%%%%%%%%%%%%%%%%%%%%%%%%%%%%%%%
%%%%%%%%%%%%%%%%%%%%%%%%%%%%%%%%%%%%%%%%%%%%%%%%%%%%%%%%%%%%%%%%%%%%%%%%
\section{Approximating convex functions}\label{sect:approximation}

We recall the most important facts about convex functions for this paper 
and refer to \cite{NicP18} for more details. 
A function $G:\Rn\to [0,\infty]$ is \emph{convex} if 
\[
G(h\xi + (1-h) \zeta)\le 
h G(\xi) + (1-h)G(\zeta) 
\]
for all $h\in (0,1)$ and $\xi,\zeta\in \Rn$. 
\emph{Strictly convex} refers to the inequality being strict whenever $\xi \ne \zeta$. 
If a convex function $G$ is differentiable, then 
\begin{equation}\label{equ:convex_estimate}
G(\xi) \ge G(\zeta) + G'(\zeta)\cdot (\xi-\zeta)
\end{equation}
and the derivative $G'$ is \emph{monotone}, i.e.  
\[
(G'(\xi) - G'(\zeta))\cdot (\xi-\zeta)\ge 0.
\] 
In particular, $G'(\xi) \cdot \xi\ge 0$ if $G'(0)=0$. 
A strictly convex function has strictly monotone derivative. 
In particular, such a derivative is injective. \cite[Theorems~3.8.1 and 3.8.3]{NicP18}

The \textit{conjugate} $G^* : \Rn \to [0,\infty]$ of $G:\Rn\to [0,\infty]$ is defined as 
\[
G^*(z):= \sup_{\xi\in\Rn}(\xi\cdot z - G(\xi)).
\]
Here extended real-valued functions as natural since if $G:=k\,|{\cdot}|$, then 
$G^*=\infty \chi_{\Rn\setminus\overline{B_k}}$. 
\emph{Fenchel's inequality} (also known as Young's inequality)
\[
\xi\cdot z \le G(z) + G^*(\xi)
\]
holds by definition of $G^*$. 
Moreover, if $G$ is convex and differentiable, then \emph{Fenchel's identity}
\begin{equation}
	\label{FenchelId}
G^*(G'(\xi)) + G(\xi) = \xi\cdot G'(\xi)
\end{equation}
holds for all $\xi\in \Rn$ \cite[Proposition~6.1.1]{NicP18}. 
If $G$ is convex and lower semi-continuous, then $G^{**}=G$ \cite[Theorem~6.1.2]{NicP18}. 
These and other results are applied point-wise to functions of type $G:\Omega\times \Rn\to [0,\infty]$, 
cf.\ \cite[Section~2.5]{HarH19}.

We say that $G:\Omega\times\Rn\to[0,\infty]$ has \emph{linear growth} if $|G(x,\xi)|\le m(1+|\xi|)$ 
for some $m>0$ and all $(x, \xi)\in\Omega\times \Rn$ 
and that $G$ is \emph{superlinear at infinity} (also known as supercoercive) if
\[
\lim_{\xi\to \infty} \inf_{x\in\Omega}\frac{ G(x,\xi)}{|\xi|} = \infty.
\] 
We say that $G$ is \emph{locally bounded} if 
\[
M_G(r) := \sup_{x\in\Omega, \xi\in B_r} G(x,\xi) < \infty
\] 
for every $r>0$. Note that these 
conditions are all uniform with respect to $x\in\Omega$.  
The following result connects these concepts and the conjugate; although the connection is 
not new, we could not find a suitable reference for our case, so we give a simple proof based directly on the definitions which shows that no additional assumptions, 
such as convexity, are required. 
\begin{lemma}%[Exercise~6.2.3, \cite{NicP18}]
\label{lem:superlinear}
If $G : \Omega\times\Rn \to [0,\infty]$ is locally bounded or superlinear at infinity, 
then the conjugate $G^* : \Omega\times\Rn \to [0,\infty]$ is superlinear at infinity 
or locally bounded, respectively.
\end{lemma}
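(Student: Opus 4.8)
The plan is to verify the two implications separately, in each case working directly from the definition $G^*(x,z)=\sup_{\xi\in\Rn}(\xi\cdot z-G(x,\xi))$. Two elementary observations drive everything: since $G\ge 0$ we always have the crude upper bound $G^*(x,z)\le\sup_{\xi}(|\xi|\,|z|-G(x,\xi))$, while any single admissible $\xi$ yields a lower bound $G^*(x,z)\ge\xi\cdot z-G(x,\xi)$. The whole argument then consists of choosing these test vectors well and checking that all estimates are uniform in $x\in\Omega$.

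First I would assume $G$ is locally bounded and show $G^*$ is superlinear at infinity, i.e.\ that for every $L>0$ there is $R>0$ with $G^*(x,z)\ge L|z|$ for all $x\in\Omega$ and $|z|\ge R$. Given $z\ne 0$, test the supremum defining $G^*$ with $\xi:=L\,z/|z|$, which lies in $B_{L+1}$; then $\xi\cdot z=L|z|$ and $G(x,\xi)\le M_G(L+1)<\infty$ by local boundedness, so $G^*(x,z)\ge L|z|-M_G(L+1)$. Dividing by $|z|$ and taking $R$ large enough that $M_G(L+1)/R\le 1$, we get $\inf_{x\in\Omega}G^*(x,z)/|z|\ge L-1$ for all $|z|\ge R$, hence $\lim_{z\to\infty}\inf_{x\in\Omega}G^*(x,z)/|z|=\infty$ since $L$ was arbitrary.

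Next I would assume $G$ is superlinear at infinity and show $\sup_{x\in\Omega,\,|z|<r}G^*(x,z)<\infty$ for every $r>0$. Fix $r$; superlinearity applied with the constant $2r$ gives $\rho=\rho(r)>0$ such that $G(x,\xi)\ge 2r|\xi|$ whenever $|\xi|\ge\rho$ and $x\in\Omega$. For $|z|<r$ and any $\xi$, estimate $\xi\cdot z-G(x,\xi)\le|\xi|r-G(x,\xi)$: if $|\xi|\le\rho$ this is at most $\rho r$ because $G\ge 0$, and if $|\xi|>\rho$ it is at most $|\xi|r-2r|\xi|=-r|\xi|<0$. Taking the supremum over $\xi$ gives $G^*(x,z)\le\rho(r)\,r$ for all $x\in\Omega$ and $|z|<r$, which is the claimed local boundedness of $G^*$.

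I do not expect a genuine obstacle: the only points needing a little care are that $G$ may take the value $+\infty$ (harmless, as it only makes $\xi\cdot z-G(x,\xi)$ more negative), that the supremum in $M_G$ is over an open ball (handled by slightly enlarging the radius), and, most importantly, that every bound be uniform in $x\in\Omega$ — which holds automatically, since both hypotheses are stated uniformly in $x$ and the conjugate is formed pointwise in $x$.
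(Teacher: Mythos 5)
Your proof is correct and follows essentially the same route as the paper's: the same radial test vector $\xi$ proportional to $z/|z|$ for the first implication, and the same splitting of the supremum into a bounded ball and its complement for the second. The only differences are cosmetic (using radius $L+1$ to accommodate the open ball in $M_G$, and the constant $2r$ instead of the paper's $k$), so nothing further is needed.
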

\begin{proof}
With the choice $\xi:=r \frac z{|z|}$ in the supremum in the definition, we see that 
$G^*(x,z) \ge r\, |z| - M_G(r)$. If $G$ is locally bounded, then  this implies that 
$\liminf_{z\to\infty} \inf_{x\in\Omega}\frac{G^*(x,z)}{|z|}\ge r$. Since this holds for any $r>0$, 
$G^*$ is superlinear at infinity.

If $G$ is superlinear at infinity, then for any $k>0$ we can choose $r>0$  
such that $\inf_{x\in\Omega}\frac{G(x,\xi)}{|\xi|}\ge k$ for every $\xi\in \Rn\setminus B_r$. 
When $z\in B_k$, we obtain that 
\begin{align*}
G^*(x,z) 
&= 
\max\bigg\{\sup_{\xi\in B_r}(\xi\cdot z - G(x,\xi)), \sup_{\xi\in\Rn\setminus B_r}(\xi\cdot z - G(x,\xi))\bigg\}\\
&\le
\max\Big\{r\,|z|, \sup_{\xi\in\Rn\setminus B_r}\big(k\,|\xi| - \inf_{x\in\Omega} G(x,\xi)\big)\Big\} \le kr.
\end{align*}
Thus $M_{G^*}(k)\le kr$ and so $G^*$ is locally bounded.
\end{proof}

Following Carozza, Kristensen and Passarelli di Napoli \cite{CarKP14}, 
we consider a variant of the second conjugate function $F^{**}$, where the set in the supremum is 
restricted to a ball. For $k\in (0,\infty)$, $x\in \Omega$ and $\xi \in \Rn$, we define
\[
F_k(x,\xi) := \sup_{z\in \overline{B_k}} (\xi\cdot z - F^*(x,z) ).
\]
The sequence $F_k$ is increasing and 
\[
F_k(x,\cdot) \nearrow F^{**}(x,\cdot) = F(x,\cdot) 
\]
for every $x\in\Omega$ if $F$ is convex and lower-semicontinuous. 
The choices $z=\xi/|\xi|$ and $z=k \xi/|\xi|$ give the lower bounds 
\[
F_k(x,\xi) \ge \max\big\{ |\xi| - M_{F^*}(1), k\, |\xi| - M_{F^*}(k)\big\}. 
\]
This is useful when $F$ has superlinear growth at infinity, since then 
Lemma~\ref{lem:superlinear} implies that $F^*$ is locally bounded, 
i.e.\ $M_{F^*} < \infty$. On the other hand, $F^*\ge 0$ gives the upper bound 
$F_k(x,\xi)\le k|\xi|$. 

Previous studies \cite{CarKP14, CarKP15, EleP22, KocK_pp, RicT24} have applied a sequence of 
additional regularizations to $F_k$
that do not preserve the normalization $F(0)=0=F'(0)$, needed in the non-autonomous case.
To avoid this work-around, we prove directly appropriate 
regularity for $F_k$, which is of independent interest. 
This allows us to simplify the proof of Theorem~\ref{thm:main} in subsequent steps. 
 
The next theorem is our second main result and it suggests that $F_k$ is the greatest convex, $k$-Lipschitz 
minorant of $F$. Finding the $C^1$-extension of convex functions in general is a non-trivial problem 
\cite{AzaM17, AzaM19}, but the specific choice of the set $(F')^{-1}(B_k)$ allows us 
to obtain the precise Lipschitz constant of this explicit extension. 
Such extension can be applied to conditions for anisotropic energies \cite{Has23}. A less elegant 
extension with higher order regularity is considered in \cite{HasO22}. 
For simplicity we write the next result without $x$-dependence; it is later applied point-wise.

The idea for the next proof was provided by an anonymous referee in connection with $\inf$-convolution (see \cite{Roc66}); 
our original proof is provided in Theorem~\ref{thm:approximation2} 
as the technique may be of independent interest.

\begin{theorem}\label{thm:approximation}
If $F\in C^1(\Rn)$ is non-negative, strictly convex and superlinear at infinity, then 
$F_k\in C^1(\Rn)$ is $k$-Lipschitz and coincides with $F$ in the set 
$(F')^{-1}(B_k)$.
\end{theorem}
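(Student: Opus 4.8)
The plan is to recognize $F_k$ as an $\inf$-convolution and exploit the strict convexity and superlinearity of $F$ to identify exactly where the convolution is ``active''. Concretely, since $F^*$ is the conjugate of $F$ and $F$ is convex, lower semicontinuous, we have $F_k(\xi) = \sup_{|z|\le k}(\xi\cdot z - F^*(z))$. Adding the indicator $I_{\overline{B_k}}$ to $F^*$ and taking conjugates, one gets that $F_k = (F^* + I_{\overline{B_k}})^*$, and by the standard formula for the conjugate of a sum this equals the $\inf$-convolution $F \,\square\, (k|\cdot|)$, i.e.
\[
F_k(\xi) = \inf_{\zeta\in\Rn}\big(F(\zeta) + k\,|\xi-\zeta|\big).
\]
From this representation $k$-Lipschitz continuity of $F_k$ is immediate (the function $\xi\mapsto k|\xi-\zeta|$ is $k$-Lipschitz uniformly in $\zeta$, and an infimum of $k$-Lipschitz functions is $k$-Lipschitz), and convexity of $F_k$ follows since an $\inf$-convolution of convex functions is convex. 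Superlinearity of $F$ guarantees the infimum is attained: for fixed $\xi$ the function $\zeta\mapsto F(\zeta)+k|\xi-\zeta|$ tends to $+\infty$ as $|\zeta|\to\infty$, so by continuity a minimizer $\zeta_\xi$ exists.

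Next I would identify the coincidence set. If $\xi\in (F')^{-1}(B_k)$, then $|F'(\xi)|<k$, and the choice $z=F'(\xi)$ is admissible in the supremum defining $F_k$; by Fenchel's identity \eqref{FenchelId}, $\xi\cdot F'(\xi) - F^*(F'(\xi)) = F(\xi)$, so $F_k(\xi)\ge F(\xi)$. The reverse inequality $F_k\le F$ always holds (take $\zeta=\xi$ in the $\inf$-convolution, or note $F^*\ge$ its own definition forces $\xi\cdot z - F^*(z)\le F(\xi)$). Hence $F_k = F$ on $(F')^{-1}(B_k)$. To upgrade this to an open-neighbourhood statement and prepare for $C^1$ regularity, I would use strict convexity: $F'$ is injective, so $(F')^{-1}(B_k)$ is well-defined and, since $F'$ is continuous, it is open; moreover at an interior point the minimizer in the $\inf$-convolution is $\zeta_\xi=\xi$ itself, because the optimality condition is $F'(\zeta_\xi)\in \partial(k|\cdot|)(\xi-\zeta_\xi)$, which for $\xi=\zeta$ reads $|F'(\xi)|\le k$.

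For the $C^1$ claim — which I expect to be the main obstacle — the strategy is to show $F_k$ is both differentiable everywhere and that its gradient is continuous. Differentiability of an $\inf$-convolution: $F_k$ is convex and finite, hence locally Lipschitz, so it is differentiable a.e.; at a point $\xi$ where the minimizer $\zeta_\xi$ is unique one shows $\nabla F_k(\xi) = F'(\zeta_\xi)$ when $|\zeta_\xi - \xi|>0$ (inherited from the smooth outer term via the envelope theorem), and $\nabla F_k(\xi)=F'(\xi)$ when $\zeta_\xi = \xi$. The delicate points are (i) uniqueness of the minimizer $\zeta_\xi$ — here strict convexity of $F$ forces $\zeta\mapsto F(\zeta)+k|\xi-\zeta|$ to be strictly convex away from the line segment issues, and one must handle the non-smoothness of $k|\cdot|$ at $0$ carefully — and (ii) continuity of $\xi\mapsto \zeta_\xi$, which follows from a standard compactness/uniqueness argument once superlinearity gives local boundedness of the minimizers. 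Given uniqueness of $\zeta_\xi$ for every $\xi$, convex subdifferential calculus yields that $\partial F_k(\xi)$ is a singleton, hence $F_k$ is differentiable everywhere, and a convex function that is differentiable everywhere is automatically $C^1$. I would therefore structure the proof as: (1) the $\inf$-convolution identity; (2) Lipschitz bound and convexity; (3) attainment and, via strict convexity, uniqueness of the minimizer; (4) the coincidence set via Fenchel's identity; (5) computation of $\nabla F_k$ from the minimizer and the $C^1$ conclusion. The single trickiest step is the uniqueness/continuity of the minimizer $\zeta_\xi$ across the transition between the region where $\zeta_\xi=\xi$ and where $\zeta_\xi\ne\xi$, since that is exactly where the non-differentiability of the norm interacts with the geometry of $(F')^{-1}(B_k)$.
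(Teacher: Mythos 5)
Your proposal is correct, but it proves the $C^1$ property by a genuinely different route than the paper. The paper stays entirely on the dual side: it computes $F_k^*=F^*+\infty\chi_{\Rn\setminus\overline{B_k}}$ explicitly, observes that this sum is strictly convex and superlinear at infinity (strict convexity of $F^*$ coming from \cite[Theorem~6.2.4]{NicP18} since $F$ is $C^1$ and superlinear), and then invokes the same duality theorem once more to conclude that $F_k=F_k^{**}$ is $C^1$; the Lipschitz bound and the coincidence set are read off directly from the sup-definition. You instead work on the primal side with the representation $F_k=F\,\square\,(k|\cdot|)$ and analyze the minimizer $\zeta_\xi$. Your route buys a more concrete picture (the explicit formula $\nabla F_k(\xi)=F'(\zeta_\xi)$ and the geometric description of where the regularization is active), at the cost of having to establish existence, uniqueness and the differentiability transfer yourself; the paper's route outsources all of that to one black-box duality theorem and is shorter. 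Incidentally, the paper credits the inf-convolution idea to a referee, and its appendix contains a third, more geometric proof via invariance of domain.

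Two remarks on the step you flag as the delicate one. First, uniqueness of $\zeta_\xi$ is not delicate at all: $\zeta\mapsto F(\zeta)+k|\xi-\zeta|$ is the sum of a strictly convex and a convex function, hence strictly convex, so there are no ``line segment issues'' and the non-smoothness of the norm at $0$ plays no role here. Second, the cleanest way to finish is not the envelope theorem but the subdifferential inclusion for exact inf-convolutions: if $F_k(\xi)=F(\zeta_\xi)+k|\xi-\zeta_\xi|$ and $p\in\partial F_k(\xi)$, then testing the subgradient inequality at $\xi'=\zeta'+(\xi-\zeta_\xi)$ gives $p\in\partial F(\zeta_\xi)=\{F'(\zeta_\xi)\}$. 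Since $F_k$ is finite and convex, $\partial F_k(\xi)$ is nonempty, hence a singleton, so $F_k$ is differentiable everywhere, and an everywhere differentiable convex function on $\Rn$ is automatically $C^1$ — exactly as you conclude. With these two points made explicit, your outline is a complete and correct proof.
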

\begin{proof}
From the definitions of conjugate and $F_k$, we  calculate
\[
F_k^*(z) 
=
\sup_{\xi\in\Rn}\Big(z\cdot \xi - \sup_{y\in \overline{B_k}}(y\cdot \xi - F^*(y))\Big)
=
\sup_{\xi\in\Rn}\inf_{y\in \overline{B_k}}\big((z-y)\cdot \xi + F^*(y)\big).
\]
When $z\in \overline B_k$, the choice $y=z$ in the infimum gives $F_k^*(z)\le F^*(z)$.
Since $F$ is strictly convex and  superlinear at infinity, $F^*$ is in $C^1(\Rn)$ \cite[Theorem~6.2.4]{NicP18}. By definition $F^*$ is convex. 
Thus by \eqref{equ:convex_estimate} we have $(z-y)\cdot (F^*)'(z) + F^*(y)\ge F^*(z)$.
The left-hand side corresponds by choosing $\xi = (F^*)'(z)$ in the definition of $F_k^*$, 
and hence we have 
$F_k^*(z) \ge F^*(z)$. 
When $z \not\in \overline{B_k}$, the choice $\xi = s z$ gives  
$(z-y)\cdot \xi + F^*(y) \ge s (z-y)\cdot z\ge s(|z|-k)|z| \to \infty$ as $s\to\infty$ 
so that $F_k^* (z) = \infty$.
Thus we have proved that  $F_k^* = F^* + \infty \chi_{\Rn\setminus\overline{B_k}}$.
%
%If $y\ne z$, then the supremum gives $+\infty$; when $y=z$ only $F^*(z)$ remains on the right-hand side. Thus 
%$F_k^*\le F^* + \infty \chi_{\Rn\setminus\overline{B_k}}$. 
%Let $z \in \overline{B_k}$.

Since $F\in C^1(\Rn)$ is convex and superlinear at infinity, 
$F^*$ is strictly convex \cite[Theorem~6.2.4]{NicP18}. Furthermore, 
$\infty \chi_{\Rn\setminus\overline{B_k}}$ is convex and superlinear at 
infinity. As a sum, $F_k^*$ is then strictly convex and superlinear at 
infinity. Therefore it follows from \cite[Theorem~6.2.4]{NicP18} that 
$F_k^{**}$ is $C^1(\Rn)$ and $F_k=F_k^{**}$ since $F_k$ is convex and lower semicontinuous \cite[Theorem~6.1.2]{NicP18}. 

From the definition of $F_k$ we also estimate with the choice 
$\zeta=\xi$ that 
\[
F_k(z)-F_k(y) = 
\sup_{\xi\in \overline{B_k}}\inf_{\zeta\in \overline{B_k}}
(\xi\cdot z - \zeta\cdot y -F^*(\xi)+F^*(\zeta))
\le 
\sup_{\xi\in \overline{B_k}} \xi\cdot (z - y )
= k\,|z-y|. 
\]
By symmetry this implies the $k$-Lipschitz continuity. 

The maximum of the function $z\mapsto \xi\cdot z - F^*(z)$ in the definition of $F_k$ is obtained at 
an interior point of $B_k$ where its gradient vanishes or at a boundary point where its 
gradient is perpendicular to the boundary. In the former case, $\xi - (F^*)'(z)=0\quad\text{for }z\in B_k$ and this is also a maximum of the function $z\mapsto \xi\cdot z - F^*(z)$ over $\Rn$; in this case $F_k(\xi)=F^{**}(\xi)=F(\xi)$.
Note that $(F^*)'(z) = (F')^{-1}(z)$ by \cite[p.~257]{NicP18}
%. For $z\in B_k$, the gradient 
%vanishes when 
%$z=((F^*)')^{-1}(\xi) = F'(\xi)$ so Fenchel's identity gives $F_k(\xi) = \xi\cdot F'(\xi) - F^*(F'(\xi))=F(\xi)$
and thus $F_k$ coincides with $F$ in $(F')^{-1}(B_k)$. 
\end{proof}

Let $R>0$. It follows from $F'(x,\xi)\cdot (\zeta-\xi)\le F(x,\zeta)-F(x,\xi)$ with 
the choice $\zeta:=\xi + \frac{F'(x,\xi)}{|F'(x,\xi)|}$ that 
$|F'(x,\xi)|\le M_F(|\xi|+1)$.
Since $F$ is locally bounded, there exists $k_0\in \N$ such that $F'(x,\xi)\in B_{k_0}$ for all $x\in \Omega$ and $\xi\in B_R$. 
Thus $F_k=F$ and $F_k'=F'$ in $\Omega\times B_R$ when $k\ge k_0$. 

Suppose that $U_k\to U$ in $L^1(\Omega; \Rn)$ and define 
$\sigma, \sigma_k:\Omega \to \Rn$ as 
$\sigma_k := F_k'(\cdot,U_k)$ and $\sigma:=F'(\cdot,U)$. 
Let us show that $\sigma_k \to \sigma$ in measure on $\Omega$.
We utilize the following result. 

\begin{theorem}[Scorza Dragoni Theorem, p.~235, \cite{EkeT99}]
A mapping $f:\Omega\times \Rn\to \Rn$ is Carath\'eodory if and only if 
for every $\epsilon>0$ and $K\Subset \Omega$ there exists $K_\epsilon\Subset K$ 
such that $|K\setminus K_\epsilon|<\epsilon$ and $f|_{K_\epsilon\times \Rn}$ is continuous.
\end{theorem}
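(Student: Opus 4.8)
The statement is an equivalence, so I would prove the two implications separately. The nontrivial direction is that a Carath\'eodory map $f$ admits, on each compact $K\Subset\Omega$, a slightly smaller compact $K_\epsilon$ on which the restriction is jointly continuous; the converse is essentially immediate. I would not attempt to reprove this classical fact from scratch inside the paper, but rather indicate the structure of the standard argument and cite \cite{EkeT99}.

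For the easy implication, suppose that for every $\epsilon>0$ and $K\Subset\Omega$ there is $K_\epsilon\Subset K$ with $|K\setminus K_\epsilon|<\epsilon$ and $f|_{K_\epsilon\times\Rn}$ continuous. Fix $\xi\in\Rn$; then $x\mapsto f(x,\xi)$ is continuous on each $K_{1/j}$, hence measurable on $\bigcup_j K_{1/j}$, which exhausts $K$ up to a null set; letting $K\nearrow\Omega$ shows measurability in $x$ for every fixed $\xi$. Continuity in $\xi$ for a.e.\ fixed $x$ is inherited directly from joint continuity on the sets $K_\epsilon\times\Rn$ together with the fact that a.e.\ $x$ lies in some $K_{1/j}$. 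Thus $f$ is Carath\'eodory.

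For the hard implication, the plan is the classical one. Cover $\Rn$ by a countable dense set $\{\xi_i\}$; for each $i$ the function $x\mapsto f(x,\xi_i)$ is measurable on $K$, so by Lusin's theorem there is a compact $K_i\Subset K$ with $|K\setminus K_i|<\epsilon 2^{-i}$ on which $x\mapsto f(x,\xi_i)$ is continuous. Set $\widetilde K:=\bigcap_i K_i$, a compact set with $|K\setminus\widetilde K|<\epsilon$ on which every $x\mapsto f(x,\xi_i)$ is continuous. One then upgrades this to joint continuity on a further slightly smaller compact set by exploiting an equicontinuity-type control: using that for a.e.\ $x$ the map $f(x,\cdot)$ is continuous, a further application of Egorov's theorem (applied to the moduli of continuity of $\xi\mapsto f(x,\xi)$ on balls, which are measurable in $x$) yields a compact $K_\epsilon\Subset\widetilde K$ with $|\widetilde K\setminus K_\epsilon|$ small on which these moduli are uniformly controlled. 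Combining the pointwise continuity at the dense set $\{\xi_i\}$ with the uniform modulus of continuity in $\xi$ gives joint continuity of $f$ on $K_\epsilon\times\Rn$ by a standard $3\epsilon$-argument. The main obstacle is precisely this last upgrade from separate to joint continuity: it requires packaging the continuity in $\xi$ in a measurable, uniformizable way, and this is where Egorov (or an equivalent uniformization) is essential. Since this is a well-known theorem, I would present it as quoted from \cite{EkeT99} and only sketch this structure rather than give full details.
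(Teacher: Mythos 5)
The paper gives no proof of this statement at all: it is quoted verbatim as a classical result with the reference \cite{EkeT99}, which is exactly what you propose to do, so your approach matches the paper's. Your sketch of the standard argument (the easy direction by exhausting with the sets $K_{1/j}$, the hard direction via a countable dense set, Lusin's theorem, and an Egorov-type uniformization of the moduli of continuity to pass from separate to joint continuity) is accurate and would correctly fill in the details if one chose to include them.
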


Let $\epsilon>0$ and $R:=\frac1\epsilon$ and choose $k_0>0$ as before such that 
$F_k=F$ and $F_k'=F'$ in $\Omega\times B_R$ when $k\ge k_0$. 
Since $F'$ is assumed to be Carath\'eodory and $\Omega$ is bounded, we can choose by the Scorza Dragoni Theorem 
a set $K_\epsilon\Subset\Omega$ such that 
$|\Omega\setminus K_\epsilon|<\epsilon$ and $F'|_{K_\epsilon\times \Rn}$ is continuous.
By uniform continuity in the compact set, we can choose $\delta>0$ such that 
$|F'(x,\xi')-F'(x,\xi)|<\epsilon$ for all $(x, \xi')\in K_\epsilon \times\overline{B_R}$ and 
$|\xi'-\xi|<\delta$. Since $U_k\to U$ in measure, we can choose $k_1>0$ 
such that $|\{ |U_k-U|\ge\delta\}|<\epsilon$ for all $k>k_1$. 
Finally, since $(\|U_k\|_1)_k$ is a bounded sequence, 
$|\{|U_k|\ge R\}|\le\frac {\|U_k\|_1}R \le c\epsilon$.
From the inequality 
\[
|\sigma_k(x)-\sigma(x)|
\le
|F_k'(x,U_k)-F'(x,U_k)| + |F'(x,U_k)-F'(x,U)|
\]
we conclude when $k\ge \max\{k_0,k_1\}$ that 
\begin{align*}
\big\{ |\sigma_k-\sigma|>\epsilon\big\}
&\subset
\big\{ |F_k'(\cdot,U_k)-F'(\cdot,U_k)|>0\big\}  \cup 
\big\{|F'(\cdot,U_k)-F'(\cdot,U)|>\epsilon\big\}  \\
&\subset 
\big\{|U_k|\ge R\big\}  
\cup
\big\{ |U_k-U|\ge\delta\big\} \cup (\Omega\setminus K_\epsilon).
\end{align*}
Therefore $\big|\big\{ |\sigma_k-\sigma|>\epsilon\big\}\big|<(1+c)\epsilon$
when $k\ge \max\{k_0,k_1\}$ and so we have proved convergence in measure.

Earlier proofs for the convergence $\sigma_k\to\sigma$ 
used that $F_k\to F$ and $F_k'\to F'$ locally uniformly in $\xi$, 
but the proof offered for the latter in \cite[p.~1075]{CarKP14} was had a slight problem.

\begin{remark}\label{rem:correction}
One part of the four-stage approximation procedure of \cite{CarKP14, CarKP15, EleP22, KocK_pp, RicT24} 
is to show that locally uniform convergence of convex $C^1$-functions $G_k$ to $G$ 
implies the same for the derivatives. 
As a part of their proof that $G_k' \to G'$, 
Carozza, Kristensen and Passarelli di Napoli \cite[p.~1075]{CarKP14} state that 
\begin{align*}
\big|(G_k'(\xi_k)-G'(\xi)) \cdot \zeta\big|
&\le 
\bigg| \frac{G_k(\xi_k + t\zeta) - G_k(\xi_k)- t G'(\xi)\cdot \zeta}{t}\bigg| \\
&\le 
\big| G_k(\xi_k + \zeta) - G_k(\xi_k)-  G'(\xi)\cdot \zeta\big|
\end{align*}
for all $\zeta\in \Rn$ and $0<|t|\le 1$ ``because difference-quotients of convex functions are 
increasing in the increment''. However, this inequality does not hold. Let us 
take as a counter-example 
$G_k(\xi):=G(\xi):=\frac12|\xi|^2$. The claim with $\zeta:=\xi-\xi_k$ becomes 
\[
|\xi_k-\xi|^2 \le \big| \tfrac12\,|\xi|^2- \tfrac12\,|\xi_k|^2 - \xi\cdot (\xi-\xi_k)\big|
= \tfrac12 |\xi_k-\xi|^2,
\]
which is false when $\xi_k\ne\xi$. 
The same flawed argument it repeated in \cite{CarKP15, KocK_pp, RicT24}.
Nevertheless, this flaw is not fatal and can be rectified 
by not introducing absolute values prematurely. 
%
%For completeness, we provide a 
%short proof with additional details.  
%Since $G_k$ is convex and differentiable, it follows from \eqref{equ:convex_estimate} that 
%\[
%G_k'(\xi) \cdot \zeta
%\le  G_k(\xi + \zeta) - G_k(\xi).
%\]
%For $\epsilon>0$ and a compact set $E$, we choose by the continuity of $G'$ a 
%value $r>0$ such that $|G'( \xi+\zeta)-G'(\xi)|\le \epsilon$ for all 
%$\xi\in E$ and $\zeta\in \overline{B_r}$. 
%By the local uniform convergence of $G_k$ to $G$ we choose $k_0>0$ such that  
%\[
%%G_k'(\xi) \cdot \zeta 
%G_k(\xi + \zeta) - G_k(\xi)
%\le 
%G(\xi + \zeta) - G(\xi) + \epsilon r
%\]
%for all $k\ge k_0$, $\xi\in E$ and $\zeta\in \overline{B_r}$. 
%Combining the previous estimates and using \eqref{equ:convex_estimate} for $G$ 
%as well as $|G'( \xi+\zeta)-G'(\xi)|\le \epsilon$, we find that 
%\[
%G_k'(\xi) \cdot \zeta 
%\le 
%G'(\xi + r \zeta)\cdot\zeta + \epsilon r
%\le 
%G'(\xi)\cdot\zeta  + (|\zeta|+r)\epsilon.
%\]
%The choice $\zeta:=r \frac{G_k'(\xi)-G'(\xi)}{|G_k'(\xi)-G'(\xi)|}$ 
%yields that 
%\[
%|G_k'(\xi)-G'(\xi)|
%=
%\tfrac1r \big(G_k'(\xi)-G'(\xi)\big)\cdot \zeta
%\le 2\epsilon. 
%\]
%Thus we have shown that $\lim_{k \to \infty}G_k' = G'$
%locally uniformly.
\end{remark}

%%%%%%%%%%%%%%%%%%%%%%%%%%%%%%%%%%%%%%%%%%%%%%%%%%%%%%%%%%%%%%%%%%%%%%%%
%%%%%%%%%%%%%%%%%%%%%%%%%%%%%%%%%%%%%%%%%%%%%%%%%%%%%%%%%%%%%%%%%%%%%%%%
%%%%%%%%%%%%%%%%%%%%%%%%%%%%%%%%%%%%%%%%%%%%%%%%%%%%%%%%%%%%%%%%%%%%%%%%
\section{Proof of the main result}

In this section we prove Theorem~\ref{thm:main}. We first use $F_k$ from Section~\ref{sect:approximation} to construct auxiliary problems and show that their almost-minimizers converge. As a side-product this gives the existence of minimizers.
With appropriate limiting procedures following \cite{CarKP15}, 
this allows us to obtain the desired identities for the original minimizer. 
By construction, the functions $F_k$ have linear growth, so we cannot guarantee 
the existence of solutions to the approximating minimization problems
\[
\min_{w \, \in \, K} \int_\Omega F_k(x,\nabla w) \,dx.
\]
Instead, we use Ekeland variational principle, first proved in \cite{Eke74},  
to generate a sequence of functions $(u_k)_k$ that plays the same role. 
%A function which satisfies the inequality $\mathcal F(v) \le \inf_V \mathcal F + \epsilon$ 
%in the theorem is called an \emph{$\epsilon$-almost minimizer}. 

\begin{theorem}[Ekeland's variational principle, Theorem~1.4.1, page~29, \cite{Zal02}]
\label{Ekeland}
Let $(V, d)$ be a complete metric space and $\mathcal F : V \to \R$ be a lower semicontinuous function bounded from below. For $\epsilon > 0$ and $v \in V$ with 
$\mathcal F(v) \le \inf_V \mathcal F + \epsilon$,
there exists for every $\lambda > 0$ a unique minimizer $v_\lambda \in V$ 
of the functional 
\[
w \longmapsto \mathcal F(w) + \epsilon \lambda^{-1} d(w, v_\lambda)
\]
with $d(v,v_\lambda) \le \lambda$ and $\mathcal F(v_\lambda) \le \mathcal F(v)$.
\end{theorem}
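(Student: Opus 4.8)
The plan is to reproduce the classical Bishop--Phelps-type argument via a partial order on $V$. First I would normalize: replacing $\mathcal F$ by $\mathcal F/\epsilon$ and $d$ by $d/\lambda$ preserves lower semicontinuity and boundedness below, turns the hypothesis $\mathcal F(v)\le\inf_V\mathcal F+\epsilon$ into $\mathcal F(v)\le\inf_V\mathcal F+1$, rescales the perturbed functional (up to the irrelevant positive factor $\epsilon$) into $w\mapsto\mathcal F(w)+d(w,v_\lambda)$, and turns the two conclusions into $d(v,v_\lambda)\le1$ and $\mathcal F(v_\lambda)\le\mathcal F(v)$. So it suffices to treat $\epsilon=\lambda=1$.

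On $V$ define $w\preceq w'$ to mean $\mathcal F(w)+d(w,w')\le\mathcal F(w')$. I would first check this is a partial order: reflexivity is immediate; antisymmetry follows by adding $\mathcal F(w)+d(w,w')\le\mathcal F(w')$ to its reversed counterpart, which forces $d(w,w')=0$; transitivity for $w_1\preceq w_2\preceq w_3$ follows by adding the two defining inequalities and applying $d(w_1,w_3)\le d(w_1,w_2)+d(w_2,w_3)$.

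Next I would build a nested sequence of lower sections. Put $v_0:=v$ and, given $v_n$, set $S_n:=\{w\in V:w\preceq v_n\}$. Each $S_n$ is nonempty (it contains $v_n$), closed (by lower semicontinuity of $\mathcal F$ and continuity of $d(\cdot,v_n)$), and every $w\in S_n$ satisfies $d(w,v_n)\le\mathcal F(v_n)-\mathcal F(w)\le\mathcal F(v_n)-\inf_V\mathcal F<\infty$. Choose $v_{n+1}\in S_n$ with $\mathcal F(v_{n+1})\le\inf_{S_n}\mathcal F+2^{-n}$; then $v_{n+1}\preceq v_n$, hence $S_{n+1}\subseteq S_n$, and for $w\in S_{n+1}$ one gets $d(w,v_{n+1})\le\mathcal F(v_{n+1})-\mathcal F(w)\le\mathcal F(v_{n+1})-\inf_{S_n}\mathcal F\le2^{-n}$, so $\operatorname{diam}S_{n+1}\le2^{1-n}\to0$. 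By Cantor's intersection theorem in the complete space $V$ there is a unique $v_\lambda$ with $\{v_\lambda\}=\bigcap_nS_n$.

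Finally I would read off the conclusions. From $v_\lambda\in S_0$ we get $v_\lambda\preceq v$, i.e.\ $\mathcal F(v_\lambda)+d(v_\lambda,v)\le\mathcal F(v)$; this yields $\mathcal F(v_\lambda)\le\mathcal F(v)$ and $d(v,v_\lambda)\le\mathcal F(v)-\inf_V\mathcal F\le1$ (which is $\le\lambda$ before normalization). For the strict minimality: if some $w\ne v_\lambda$ satisfied $\mathcal F(w)+d(w,v_\lambda)\le\mathcal F(v_\lambda)$, then $w\preceq v_\lambda$, and since $v_\lambda\in S_n$ gives $v_\lambda\preceq v_n$ for every $n$, transitivity gives $w\preceq v_n$, i.e.\ $w\in\bigcap_nS_n=\{v_\lambda\}$, a contradiction; hence $\mathcal F(w)+d(w,v_\lambda)>\mathcal F(v_\lambda)$ for all $w\ne v_\lambda$, which is exactly the asserted unique minimality of the perturbed functional. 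There is no deep difficulty here, this being a classical result; the one step requiring care, and the crux, is arranging $\operatorname{diam}S_{n+1}\to0$ by combining closedness (from lower semicontinuity), the near-infimal choice of $v_{n+1}$, and the order bound, so that Cantor's theorem applies and, crucially, so that the same nested structure delivers the self-referential uniqueness and not merely existence of a minimizer.
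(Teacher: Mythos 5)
Your proof is correct. Note, however, that the paper does not prove this statement at all: it is Ekeland's variational principle, quoted verbatim with a citation to Zalinescu's book (Theorem~1.4.1 there), so there is no in-paper argument to compare against. What you have written is the standard textbook proof — normalization to $\epsilon=\lambda=1$, the Bishop--Phelps ordering $w\preceq w'\iff\mathcal F(w)+d(w,w')\le\mathcal F(w')$, the nested closed sections $S_n$ with near-infimal choices forcing $\operatorname{diam}S_{n+1}\to0$, and Cantor's intersection theorem — and every step checks out, including the closedness of $S_n$ from lower semicontinuity, the finiteness of $\inf_{S_n}\mathcal F$ needed to pick $v_{n+1}$, and the derivation of strict (hence unique) minimality of the self-referential perturbed functional from $\bigcap_nS_n=\{v_\lambda\}$.
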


%%%%%%%%%%%%%%%%%%%%%%%%%%%%%%%%%%%%%%%%%%%%%%%%%%%%%%%%%%%%%%%%%%%%%%%%
\subsection*{Approximation of the minimizer}
%\label{Generation of almost-minimizers}
We define
\[
I(w) := \int_\Omega F(x,\nabla w) \,dx 
\quad\text{and}\quad
I_k(w) := 
\int_\Omega F_k(x,\nabla w) \,dx.
\]
Fix a positive vanishing sequence $(\varepsilon_k)_k$. 
Let $(u_k)$ be the sequence of $\epsilon_k$-almost minimizers generated by Theorem~\ref{Ekeland} with
functional $I_k$, $\epsilon=\epsilon_k$, $\lambda=\sqrt{\epsilon_k}$, $V=(K, \|\cdot\|_{W^{1,1}})$ 
and arbitrary $v\in V$ with $\mathcal F(v) \le \inf_V \mathcal F + \epsilon$. 
By Lemma~\ref{lem:superlinear}, $M_{F^*}<\infty$. 
Using $F_k(x,\xi)\ge j\,|\xi| - M_{F^*}(j)$ with $j=1$, we get that $(\nabla u_k)_k$ is bounded in 
$L^1(\Omega;\Rn)$. Since $u_k\in K\subset W^{1,1}_{u_0}(\Omega)$, the 
Rellich--Kondrachov Theorem yields a (non-relabeled) subsequence $(u_k)_k$ converging to some $u$ in $L^1(\Omega)$.

Let $\epsilon>0$ and fix $j>\frac1\epsilon$ and $\lambda>2M_{F^*}(j)$. Then 
$t \le 2(t-M_{F^*}(j)) \le \frac2j F_k(x, t)$ when $t\ge \lambda$ by the earlier lower bound 
on $F_k$, so that 
\[
\int_{\{|\nabla u_k|>\lambda\}} |\nabla u_k|\, dx 
%\le
%2 \int_{\{|\nabla u_k|>\lambda\}} |\nabla u_k| - M_{F^*}(j)\, dx 
\le
\frac 2j\int_{\{|\nabla u_k|>\lambda\}} F_k(x, |\nabla u_k|)\, dx
< C\epsilon
\]
when $k\ge j$. For every $k<j$ we can choose $\lambda_k>0$ such that 
$\int_{\{|\nabla u_k|>\lambda_k\}} F_k(|\nabla u_k|)\, dx
< \epsilon$ by the the absolute continuity of the intergal. 
Then Theorem~1.3, (b)$\Rightarrow$(a), \cite[p.~239]{EkeT99} implies that 
$(\nabla v_k)_k$ has a subsequence which converges weakly in $L^1(\Omega, \R^n)$.
Since $K$ is convex and closed, it is weakly closed \cite[p.~6]{EkeT99}, 
and so $u\in K$. 

Since $F_k$ is convex and has linear growth in the second variable, % and $\Omega$ is a \tcr{bounded Lipschitz domain}, 
the functional $I_k$ is weakly lower semicontinuous 
in $W^{1,1}_{u_0}(\Omega)$ \cite[Corollary~3.24, Remark~12.13]{Dac08}. 
Since $I_k$ is increasing in $k$, it follows from this that 
\[
I_k(u) 
\le 
\liminf_{j\to \infty} I_k(u_j)
\le 
\liminf_{j\to \infty} I_j(u_j)
\le
\liminf_{j\to \infty} \big(\inf_{w\in K} I_j(w) + \epsilon_j\big)
\le \inf_{w\in K} I(w). 
\]
Then it follows by monotone convergence that 
\[
I(u)
= 
\lim_{k\to \infty}
I_k(u) 
\le 
\inf_{w\in K} I(w) 
\]
and so $u$ is a minimizer of $I$. 
By the strict convexity, the minimizer of $I$ in $K$ is unique.

By the Ekeland variational principle construction, $u_k$ is the unique minimizer of
\[
w\longmapsto\int_\Omega  F_k(x,\nabla w)+\epsilon_k\,|\nabla w-\nabla u_k|  \,dx.
\]
%and it holds $\int_\Omega F_k(x,\nabla u_k)\,dx\le\int_\Omega F_k(x,\nabla u)\,dx$.  %This inequality is not correct? But it is not needed.
For $w\in K$ and $0<h<1$, we define
\begin{align*}
w_k:=u_k+h(w-u_k).
\end{align*}
Since $w_k=(1-h)u_k+hw$, it belongs to $K$ by convexity. 
From the definition of $w_k$, we see that 
$\epsilon_k\,|\nabla w_k-\nabla u_k|= h\epsilon_k\,|\nabla w-\nabla u_k|$, 
so the minimizing property of $u_k$ implies that  
\begin{align*}
\int_\Omega \frac{F_k(x,\nabla u_k+h\nabla(w-u_k))-F_k(x,\nabla u_k)}h + \epsilon_k\, |\nabla w-\nabla u_k|\,dx 
\ge 0.
\end{align*}
Since $F_k$ is $k$-Lipschitz, $\frac{|F_k(x,\nabla w_k)-F_k(x,\nabla u_k)|}h \le \frac kh\,|\nabla(w_k-u_k)| 
= k\,|\nabla(w-u_k)|$, which is a suitable majorant. 
Since $F_k$ is differentiable, 
\[
\frac{F_k(x,\nabla u_k+h\nabla(w-u_k))-F_k(x,\nabla u_k)}h \to F_k'(x,\nabla u_k)\cdot \nabla(w-u_k)
\]
as $h\to 0^+$ for a.e.\ $x\in\Omega$. 
Thus we obtain by dominated convergence and the minimizing property that
\begin{align}
\label{dis-var}
\int_\Omega F_k'(x,\nabla u_k)\cdot \nabla(w-u_k) \,dx 
\ge -\epsilon_k\int_\Omega|\nabla w-\nabla u_k| \,dx
\ge -c\epsilon_k,
\end{align}
where $c:=\|\nabla w\|_1+\sup_k \|\nabla u_k\|_1 < \infty$.

\subsection*{The integrability claims}

Let $\sigma_k:=F_k'(x,\nabla u_k)$ and $\sigma:=F'(x,\nabla u)$, 
and recall that $F(\cdot, t\, \nabla w_0)\in L^1(\Omega)$. 
We choose $w := w_0$ in \eqref{dis-var} and obtain that
\begin{align*}
\int_\Omega \sigma_k\cdot \nabla w_0 \,dx + c\epsilon_k\ge\int_\Omega \sigma_k\cdot \nabla u_k \,dx.
\end{align*}
By %$F_k\ge 0$ and 
Fenchel's identity, 
$F^*_k(x,\sigma_k) = \sigma_k\cdot \nabla u_k - F_k(x,\nabla u_k)
\le \sigma_k\cdot \nabla u_k$. 
Integrating over $\Omega$ and using the previous inequality, we have 
for $t > 1$ from the assumption of Theorem~\ref{thm:main} that
\begin{align*}
\int_\Omega F^*_k(x,\sigma_k) \,dx 
&\le \int_\Omega \sigma_k\cdot \nabla u_k \,dx 
\le %\int_\Omega \sigma_k\cdot \nabla w_0 \,dx +c\epsilon_k  
%= 
\frac{1}{t} \int_\Omega \sigma_k\cdot t\nabla w_0 \,dx +c\epsilon_k  \\
&\le \frac{1}{t} \int_\Omega F^*_k(x,\sigma_k) \,dx + \, \frac{1}{t} \int_\Omega F_k(x, t \, \nabla w_0) \,dx +c\epsilon_k ,
\end{align*}
where we also used Fenchel's inequality. 
Reabsorbing the first term on the right-hand side into the left-hand side and using 
$F_k \le F$  and that $(\epsilon_k)_k$ is bounded, we obtain that
\begin{align*}
\int_\Omega F^*_k(x,\sigma_k) \,dx  
&\le \frac{1}{t-1} \int_\Omega F_k(x, t \, \nabla w_0) \,dx  +\frac{ct}{t-1} \epsilon_k  
%&\le \frac{1}{t-1} \int_\Omega F(x, t \, \nabla w_0) \,dx +\frac{t\,\epsilon_k}{t-1}c  \\
\le C_1 \int_\Omega F(x,t \, \nabla w_0) \,dx +C_2.
\end{align*}
By $F_k\le F$ it follows from the definition that $F_k^* \ge F^*$. 
Since $F(\cdot, t \, \nabla w_0) \in L^1(\Omega)$ by assumption, we conclude that
\begin{equation}\label{non-fin}
\int_\Omega F^*(x,\sigma_k) \,dx 
\le 
C_1 \int_\Omega F(x,t \, \nabla w_0) \,dx + C_2 < \infty.
\end{equation}
Since $(\sigma_k)_k$ converges in measure to $\sigma$ (as proved at the end of 
Section~\ref{sect:approximation}), 
it converges almost everywhere (up to a non-relabeled subsequence).
Thus the previous estimate and Fatou's Lemma yield that
\[
\int_\Omega F^*(x,F'(x,\nabla u)) \,dx \le \liminf_{k \to \infty} \int_\Omega F^*(x, \sigma_k) \,dx < \infty.
\]
Hence $F^*(\cdot,F'(\cdot,\nabla u)) \in L^1(\Omega)$.
Since $F(\cdot, \nabla u) \in L^1(\Omega)$ by the definition of minimizers, Fenchel's 
identity gives $F'(\cdot,\nabla u)\cdot \nabla u \in L^1(\Omega)$.

%%%%%%%%%%%%%%%%%%%%%%%%%%%%%%%%%%%%%%%%%%%%%%%%%%%%%%%%%%%%%%%%%%%%%%%%
\subsection*{The variational inequality}
%\label{sigma-convergence}

%Now we prove that $\div \sigma \le 0$. 
Since $F$ is locally bounded, $M_F(t)$ is as well. 
By Lemma~\ref{lem:superlinear}, $M_F^*(t)$ is superlinear at infinity. 
Since $F(x,\xi)\le M_F(|\xi|)$, the opposite inequality holds 
for the conjugates. By \eqref{non-fin},
\[
\sup_{k \in \N} \int_\Omega M_F^*(|\sigma_k|) \,dx 
\le 
\sup_{k\in\N} \int_\Omega F^*(x,\sigma_k) \,dx %\le C \int_\Omega F(x,t\nabla u_0) \,dx+C 
< \infty,
\]
and so the de la Vall\`{e}e--Poussin Theorem \cite[p.~240]{EkeT99} implies the equi-integrability of 
$(\sigma_k)_k$. Since also $\sigma_k\to \sigma$ a.e., 
Vitali's Convergence Theorem \cite[p.~133]{Rud87}
yields $(\sigma_k)_k \to \sigma$ in $L^1(\Omega)$. 
With this we choose $w:= u_k + \eta$ for $\eta\in W^{1,\infty}(\Omega)$ in \eqref{dis-var} and obtain that
\begin{equation*}
\int_\Omega \sigma\cdot \nabla \eta \,dx
= 
\lim_{k\to\infty} \int_\Omega \sigma_k\cdot \nabla \eta \,dx
\ge 
\lim_{k\to\infty} c\epsilon_k
=
0 
\end{equation*}
provided that $w\in K$. This completes the proof of Theorem~\ref{thm:main}. 
%If $\eta+K\subset K$ for every $\eta\in C^\infty_0(\Omega)$ 
%this gives $\div\sigma = 0$ in the distributional sense; 
%if $\eta+K\subset K$ only when $\eta\ge 0$ we get $\div\sigma \le 0$, instead.

\begin{remark}
Theorem \ref{thm:main} should hold also in the vectorial case, 
i.e.\ when $F:\Omega\times \R^{n\times d}\to \R$, with $d\in\N$, and $K\subset W^{1,1}(\Omega;\R^d)$. Indeed, in this case the necessary and sufficient condition for the weak lower semicontinuity of $I$ on $W^{1,1}(\Omega,\R^d)$ is that $F$ is quasiconvex (see \cite[Theorems~8.1 and 8.11]{Dac08}), which is implied by the strict convexity. On the other hand, in the vectorial case our result is not optimal since quasiconvexity is weaker than strict convexity. Moreover, our result is also not optimal in the vectorial case with $(p,q)$-growth or linear growth since in these cases quasincovexity is not a necessary and sufficient condition for the lower semicontinuity of $I$. See \cite{Anz85, BecSch15, BecBulG18, Bild03} for details.
\end{remark}

\begin{remark}
\label{equivalence}
Using the Fenchel identity \eqref{FenchelId} as in the proof, we see that 
the two integrability conditions in Theorem~\ref{thm:main} are equivalent, since $F(\cdot, \nabla u)\in L^1(\Omega)$. 
Furthermore, $F(\cdot, t\nabla v)\in L^1(\Omega)$ for some $t>1$ implies that 
$F'(\cdot,\nabla v)\cdot \nabla v \in L^1(\Omega)$ which in turn implies that 
$F(\cdot, \nabla v)\in L^1(\Omega)$; this follows from the point-wise inequality 
\[
F(\xi)\le \xi\cdot F'(\xi)
\le
\frac{F(t\xi) -F(\xi) }{t-1}
\le
\tfrac1{t-1} F(t\xi). 
\]
If $F$ is doubling, then these conditions are equivalent and the factor $t>1$ is not needed. 
\end{remark}

\appendix
\section{An alternative proof of differentiability}\label{app:approximation}

We provide an alternative proof to Theorem~\ref{thm:approximation} which 
is not based on $\inf$-convolution, which shows that $F_k\in C^1(\Rn)$. This proof is more geometric 
and direct, but not as elegant and also involves some extra assumptions. 

\begin{theorem}\label{thm:approximation2}
If $F\in C^1(\Rn)$ with $F(0)=0=F'(0)$ is strictly convex and superlinear at infinity, then 
$F_k\in C^1(\Rn)$.% is $k$-Lipschitz and coincides with $F$ in the set $(F')^{-1}(B_k)$.
\end{theorem}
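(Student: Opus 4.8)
The goal is to reprove that $F_k \in C^1(\R^n)$ for a strictly convex, superlinear $F \in C^1$ with $F(0) = 0 = F'(0)$, without appealing to the $\inf$-convolution machinery behind Theorem~\ref{thm:approximation}. The plan is to work directly from the geometric description of $F_k$ that already appears at the end of the proof of Theorem~\ref{thm:approximation}: the supremum defining $F_k(\xi) = \sup_{z \in \overline{B_k}}(\xi \cdot z - F^*(z))$ is attained, and the maximizer $z(\xi)$ is characterized by a first-order condition — either $z(\xi) \in B_k$ with $(F^*)'(z(\xi)) = \xi$ (equivalently $z(\xi) = F'(\xi)$, using $(F^*)' = (F')^{-1}$), or $z(\xi) \in \partial B_k$ with the constrained stationarity condition $\xi - (F^*)'(z(\xi)) = \mu z(\xi)$ for some Lagrange multiplier $\mu \ge 0$. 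Since $F^*$ is strictly convex (by \cite[Theorem~6.2.4]{NicP18}, using that $F$ is $C^1$ and superlinear), the objective $z \mapsto \xi \cdot z - F^*(z)$ is strictly concave, so the maximizer $z(\xi)$ is \emph{unique}. This uniqueness is the crux: it will let me identify $F_k'(\xi) = z(\xi)$ and then prove continuity of $\xi \mapsto z(\xi)$.

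First I would establish that $F_k$ is differentiable with $F_k'(\xi) = z(\xi)$. This is a standard envelope/Danskin-type argument: since $F_k$ is convex and $k$-Lipschitz (shown in the excerpt's running text), it is differentiable at $\xi$ iff its subdifferential is a singleton; and $\partial F_k(\xi)$ equals the (closed convex hull of the) set of maximizers $z(\xi)$, which is a singleton by strict concavity of the objective. Hence $F_k \in C^{0,1}$ is differentiable everywhere with gradient $z(\xi)$. Alternatively, and perhaps more self-containedly, I would verify the two-sided estimate $0 \le F_k(\xi + h) - F_k(\xi) - z(\xi)\cdot h \le (z(\xi + h) - z(\xi)) \cdot h$ directly from the definition (the left inequality by plugging $z(\xi)$ into the sup defining $F_k(\xi+h)$; the right by plugging $z(\xi+h)$ into the sup defining $F_k(\xi)$) and then use continuity of $z$ to conclude the difference quotient tends to $z(\xi)\cdot h/|h|$ appropriately.

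The main obstacle — and the reason this proof is "less elegant" with "extra assumptions" — is proving that $\xi \mapsto z(\xi)$ is continuous. I would argue by a compactness/closed-graph argument: if $\xi_j \to \xi$, then $z(\xi_j) \in \overline{B_k}$ is bounded, so along a subsequence $z(\xi_j) \to \bar z \in \overline{B_k}$; passing to the limit in the inequality $\xi_j \cdot z(\xi_j) - F^*(z(\xi_j)) \ge \xi_j \cdot w - F^*(w)$ for all $w \in \overline{B_k}$ — using continuity of $F^*$ on $\overline{B_k}$, which holds since $F^*$ is finite and convex there, hence continuous on the interior, with the needed boundary control coming from lower semicontinuity and the fact that $F^*(z) \le F_k^*$-type bounds keep things finite near $\partial B_k$ — shows $\bar z$ is a maximizer for $\xi$, so $\bar z = z(\xi)$ by uniqueness. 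Since every subsequential limit is $z(\xi)$, the whole sequence converges: $z(\xi_j) \to z(\xi)$. The role of the normalization $F(0) = 0 = F'(0)$ and strict convexity is precisely to guarantee $F^*$ is finite, strictly convex, and $C^1$ on a neighborhood large enough that the variational characterization and the identity $(F^*)' = (F')^{-1}$ are available; I would flag where each is used. Once continuity of $z$ is in hand, $F_k' = z$ is continuous, i.e. $F_k \in C^1(\R^n)$.
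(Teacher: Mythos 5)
Your proposal is correct, and it differs from the paper's proof precisely at the crucial step: the continuity of the maximizer map $\xi\mapsto z(\xi)$ (called $H$ in the paper). The paper proves this by constructing an explicit parametrization $G:\partial B_k\times(0,\infty)\to\Rn\setminus\{0\}$ of the first-order conditions, checking case by case that $G$ is injective via monotonicity of $(F^*)'$, invoking Brouwer's invariance of domain to conclude $G^{-1}$ is continuous, and then writing $H=P\circ G^{-1}$; the origin must be patched separately using that $H=F'$ on the neighborhood $(F')^{-1}(B_k)$ of $0$, which is where the normalization $F'(0)=0$ actually enters. You instead run the standard Berge-maximum-theorem argument: $\overline{B_k}$ is compact, $F^*$ is finite and convex on all of $\Rn$ (hence continuous --- your hedging about boundary control is unnecessary, there is no boundary issue since $F^*$ is finite everywhere by superlinearity of $F$), and the maximizer is unique by strict concavity, so every subsequential limit of $z(\xi_j)$ is a maximizer for $\xi$ and therefore equals $z(\xi)$. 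From there both proofs use the identical two-sided envelope inequality $(\xi-\zeta)\cdot z(\zeta)\le F_k(\xi)-F_k(\zeta)\le(\xi-\zeta)\cdot z(\xi)$ to upgrade continuity of $z$ to $F_k'=z$. Your route is shorter and more elementary, avoiding algebraic topology entirely, and --- contrary to your own attribution of the hypotheses --- it does not use $F(0)=0=F'(0)$ at all: finiteness, strict convexity and $C^1$-regularity of $F^*$ already follow from $F$ being $C^1$, strictly convex and superlinear. What the paper's heavier argument buys is an explicit inverse description of $H$ in polar-type coordinates, but for the stated theorem your approach suffices and in fact proves it under weaker hypotheses.
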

\begin{proof}
Since $F$ is differentiable, strictly convex and superlinear at infinity, 
$F^*$ is strictly convex and $C^1$ \cite[Theorem~6.2.4]{NicP18}. 
%Unfortunately, this proof of $C^1$-smoothness does not work when the set in the 
%supremum is restricted, as is the case for $F_k$. Therefore, we employ a more explicit approach. 

The maximum of the function $z\mapsto \xi\cdot z - F^*(z)$ in the definition of $F_k$ is obtained at 
an interior point where its gradient vanishes or at a boundary point where its 
gradient is perpendicular to the boundary:
\[
\xi - (F^*)'(z)=0\quad\text{for }z\in B_k
\]
or
\[
\xi - (F^*)'(z)=s z\quad\text{for }z\in \partial B_k, s\ge 0.
\]
Here $s\ge 0$ because this gives the exterior normal and thus the maximum. 
Note that $(F^*)'(z) = (F')^{-1}(z)$ by \cite[p.~257]{NicP18}. For $z\in B_k$, the gradient 
vanishes when 
$z=((F^*)')^{-1}(\xi) = F'(\xi)$ so Fenchel's identity gives $F_k(\xi) = \xi\cdot F'(\xi) - F^*(F'(\xi))=F(\xi)$
and thus $F_k$ coincides with $F$ in $(F')^{-1}(B_k)$. 

Let us define $H:\Rn\to \overline{B_k}$ by 
$H(\xi) := \argmax_{z\in \overline{B_k}} (\xi\cdot z - F^*(z))$; this is well-defined 
since the function is strictly concave and $\overline{B_k}$ is convex. 
Then $F_k(\zeta) = \zeta\cdot H(\zeta)-F^*(H(\zeta))$ and  
$F_k(\xi) \ge \xi\cdot H(\zeta)-F^*(H(\zeta))$ and vica versa for $H(\xi)$ so that 
\[
(\xi-\zeta)\cdot H(\zeta)\le F_k(\xi)-F_k(\zeta) \le (\xi-\zeta)\cdot H(\xi).
\]
Since $|H|\le k$, this gives that $F_k$ is $k$-Lipschitz. Once we prove that  
$H$ is continuous, this inequality  implies that $F_k'(\xi)=H(\xi)$ and so $F_k\in C^1$. 

Define a continuous mapping $G:\partial B_k\times (0,\infty)\to\Rn\setminus \{0\}$ by
\[
G(z, r) := 
\begin{cases}
(F^*)'(r z)&\text{if } r\in (0,1), \\
(r-1)z+(F^*)'(z)&\text{if } r\in [1,\infty). \\
\end{cases}
\]
This amounts to solving the zeros of the derivative in $\xi$, so this is some kind 
of inverse to $H$, as we see later. 
Strict monotonicity of $(F^*)'$ implies that $G|_{\partial B_k\times (0,1]}$ is an injection.
For $\partial B_k\times [1,\infty)$, we consider the equation
\[
sz+(F^*)'(z) = tw+(F^*)'(w).
\]
Since $(F^*)'$ is monotone, we obtain that 
\[
0\ge 
\big((F^*)'(w)-(F^*)'(z)\big)\cdot(z-w)
=
(sz-tw)\cdot(z-w) = s|z|^2+t|w|^2-(s+t)z\cdot w. 
\]
If $z,w\in \partial B_k$, then this is only possible when $z=w$. From the original equation we then 
see that also $s=t$. Thus $G|_{\partial B_k\times [1,\infty)}$ is an injection. 
Suppose next that $z\in \partial B_k$, $w\in B_k$, $s>0$ and $t=0$. The inequality 
becomes $z\cdot w \ge |z|^2$, which is impossible. Having checked all combinations, 
we conclude that $G$ is an injection. 
It follows from Brouwer's theorem on the invariance of domain 
\cite[Theorem~VI~9]{HurW48} that $G^{-1}$ is continuous. 
Let $P:\partial B_k\times (0, \infty)\to \Rn$ be the projection 
(in scaled polar coordinates) onto the ball $\overline {B_k}$: 
\[
P(z,s)=
\begin{cases}
s z&\text{if } s\in (0,1), \\
z&\text{if } s\in [1,\infty). \\
\end{cases}
\]
Since $P$ and $G^{-1}$ are continuous, the same is true for $H=P\circ(G^{-1})$ in 
$\Rn\setminus \{0\}$. In the neighborhood $(F')^{-1}(B_k)$ of $0$ 
we showed that $H=F'$, so it is continuous. Thus $H$ is continuous in all of $\Rn$. 
\end{proof}

%%%%%%%%%%%%%%%%%%%%%%%%%%%%%%%%%%%%%%%%%%%%%%%%%%%%%%%%%%%%%%%%%%%%%%%%
%%%%%%%%%%%%%%%%%%%%%%%%%%%%%%%%%%%%%%%%%%%%%%%%%%%%%%%%%%%%%%%%%%%%%%%%
%%%%%%%%%%%%%%%%%%%%%%%%%%%%%%%%%%%%%%%%%%%%%%%%%%%%%%%%%%%%%%%%%%%%%%%%

\section*{Acknowledgment}

We would like to thank the anonymous referees especially for some mathematical ideas 
that substantially improved and simplified the proofs.

The author A.T. is member of the Gruppo Nazionale per l’Analisi Matematica,
la Probabilità e le loro Applicazioni (GNAMPA) of the Istituto Nazionale di Alta Matematica (INdAM). The author has been partially supported through the INdAM-GNAMPA 2024 Project “Interazione ottimale tra la regolarità dei coefficienti e l’anisotropia del problema in funzionali integrali a crescite non standard” (CUP: E53C23001670001).

%%%%%%%%%%%%%%%%%%%%%%%%%%%%%%%%%%%%%%%%%%%%%%%%%%%%%%%%%%%%%%%%%%%%%%%%
%%%%%%%%%%%%%%%%%%%%%%%%%%%%%%%%%%%%%%%%%%%%%%%%%%%%%%%%%%%%%%%%%%%%%%%%
%%%%%%%%%%%%%%%%%%%%%%%%%%%%%%%%%%%%%%%%%%%%%%%%%%%%%%%%%%%%%%%%%%%%%%%%

\end{document}